\newif\ifdevelopmentVersion
    \pgfplotsset{compat=newest,%
    }
    \tikzset{external/system call={lualatex \tikzexternalcheckshellescape --shell-escape -halt-on-error 
    -interaction=batchmode -jobname "\image" "\texsource"}}
    \tikzset{
    png export/.style={
        external/system call=%
        {lualatex \tikzexternalcheckshellescape --shell-escape -halt-on-error -interaction=batchmode -jobname 
    "\image" "\texsource" && %
        convert -density 300 "\image.pdf" "\image.png"},
    }
    }
    \tikzset{%
    /pgf/images/external info,
    use png/.style={png export,png import},
    png import/.code={%
        \tikzset{%
        /pgf/images/include external/.code={%
            \includegraphics%
            [width=\pgfexternalwidth,height=\pgfexternalheight]%
            {{##1}.png}%
        }%
        }%
    }%
    }
    \tikzset{png export}
\definecolor{mediumblue}{RGB}{0,0,205}
\definecolor{navyblue}{RGB}{0,0,128}
\definecolor{midnightblue}{RGB}{25,25,112}
\definecolor{royalblue4}{RGB}{39,64,139}
\definecolor{blue3}{RGB}{0,0,205}
\definecolor{steelblue3}{RGB}{79,148,205}
\definecolor{steelblue4}{RGB}{54,100,139}
\definecolor{brown}{RGB}{165,42,42} 
\definecolor{brown3}{RGB}{205,51,51} 
\definecolor{brown4}{RGB}{139,35,35} 
\definecolor{red3}{RGB}{205,0,0} 
\definecolor{tomato}{RGB}{205,79,57} 
\definecolor{firebrick3}{RGB}{205,38,38}
\definecolor{firebrick4}{RGB}{139,26,26}
\definecolor{gold}{RGB}{255,215,0}
\definecolor{gold3}{RGB}{238,201,0} 
\definecolor{darkgoldenrod1}{RGB}{255,185,15}
\definecolor{goldenrod1}{RGB}{255,193,37}
\definecolor{goldenrod}{RGB}{218,165,32}
\definecolor{orange}{rgb}{.9,.6,.1}
\definecolor{dunkelorange}{rgb}{.9,.5,.0}
\definecolor{orange2}{RGB}{238,154,0}
\definecolor{orange3}{RGB}{205,133,0}
\definecolor{darkgreen}{RGB}{0,100,0}
\definecolor{green3}{RGB}{0,205,0} 
\definecolor{olivedrab}{RGB}{107,142,35}
\definecolor{olivedrab2}{RGB}{179,238,58}
\definecolor{olivedrab3}{RGB}{154,205,50}
\definecolor{forestgreen}{RGB}{34,139,34}
\definecolor{darkolivegreen}{RGB}{85,107,47}
\definecolor{darkolivegreen4}{RGB}{110,139,61}
\definecolor{khaki3}{RGB}{205,198,115}
\definecolor{grey}{rgb}{0.5,0.5,0.5}
\definecolor{dimgrey}{RGB}{105,105,105}
\definecolor{dimgrey2}{RGB}{153,153,153}
\definecolor{dimgrey3}{RGB}{181,181,181}
\definecolor{lightgrey}{RGB}{211,211,211}
\definecolor{lightergrey}{RGB}{201,201,201}
\definecolor{verylightgrey}{RGB}{222,222,222}
\definecolor{braun}{rgb}{.6,.5,.1}
\definecolor{braun2}{rgb}{.6,.4,.1}
\definecolor{tan4}{RGB}{149,90,43}
\definecolor{wheat3}{RGB}{205,186,150}
\definecolor{wheat4}{RGB}{139,126,102}
\newcommand{\bewend}{\rule{1ex}{1ex}}
  \def\thmt@headstyle@margincolored{%
    \makebox[0pt][r]{\color{black!40}\NUMBER\ }\NAME\NOTE
  }
\newtheorem{notation}{Notation}{\bfseries}{\itshape}
\newtheorem{assumptions}{Assumptions}{\bfseries}{\itshape}
\newcommand{\dom}{\Omega}
\newcommand{\vt}[1]{\boldsymbol{#1}}
\newcommand{\addnullcolor}{black}
\newcommand{\oB}[1]{\partial\Omega_#1 \cap \partial\Omega}
\newcommand{\nw}{\mathfrak{g}}
\newcommand{\paln}[2][l]{p_{\alpha,#1}^{#2}}
\newcommand{\pwln}[2][l]{p_{w,#1}^{#2}}
\newcommand{\pgln}[2][l]{p_{\nw,#1}^{#2}}
\newcommand{\palni}[2][l]{p_{\alpha,#1}^{n,#2}}
\newcommand{\pwlni}[2][l]{p_{w,#1}^{n,#2}}
\newcommand{\pglni}[2][l]{p_{\nw,#1}^{n,#2}}
\newcommand{\Sln}[2][l]{S_{#1}^{#2}}
\newcommand{\Slni}[2][l]{S_{#1}^{n,#2}}
\newcommand{\gradPalnPlusGravity}[2][l]{\bm{\nabla} \bigl( \paln[#1]{#2} - z_{\alpha} \bigr)}
\newcommand{\fluxn}[2][l]{\vt{F_{#2,#1}^{n}}}
\newcommand{\fluxb}[2][]{\vt{F_{#2,l}^{#1}}}
\newcommand{\fluxai}[2][l]{\vt{F_{\alpha,#1}^{n,#2}}}
\newcommand{\kalni}[2][l]{k_{\alpha,#1}^{n,#2}}
\newcommand{\kaln}[2][l]{k_{\alpha,#1}^{#2}}
\newcommand{\gali}[2][l]{g_{\alpha,#1}^{#2}}
\newcommand{\gal}[1][l]{g_{\alpha,#1}}
\newcommand{\epali}[2][l]{e_{p,#1}^{\alpha,#2}}
\newcommand{\epwli}[2][l]{e_{p,#1}^{w,#2}}
\newcommand{\epgli}[2][l]{e_{p,#1}^{\nw,#2}}
\newcommand{\gradepali}[2][l]{\bm{\nabla}e_{p,#1}^{\alpha,#2}}
\newcommand{\egali}[2][l]{e_{g,#1}^{\alpha,#2}}
\newcommand{\epaliOnGamma}[2][l]{e_{p,#1}^{\alpha,#2}{}}
\newcommand{\egaliOnGamma}[2][l]{e_{g,#1}^{\alpha,#2}{}}
\newcommand{\testfunc}[2][l]{\varphi_{#2,#1}}
\newcommand{\spl}{\bigl\langle}
\newcommand{\spr}{\bigr\rangle}
\newcommand{\tspl}{\langle}
\newcommand{\tspr}{\rangle}
\newcommand{\bspl}{\Bigl\langle}
\newcommand{\bspr}{\Bigr\rangle}
\newcommand{\Fs}{\mathcal{V}}
\newcommand{\Tracespace}{H^{1/2}_{00}(\Gamma)}
\newcommand{\tGamma}{{\Gamma}}
\newcommand{\onGamma}[1]{{#1}{|_\Gamma}}
\newcommand{\restrictTo}[2]{{#1}{}{|_{#2}}}
\newcommand{\RR}{\mathbb{R}}
\newcommand{\NN}{\mathbb{N}}
\DeclareMathOperator{\dv}{\bm{\nabla}\cdot}
\providecommand{\abs}[1]{\lvert#1\rvert}
\providecommand{\norm}[1]{\lVert#1\rVert}
\newcommand{\completion}[2]{\overline{#1}{}^{#2}}
\newif\ifoptionaltext
\newcommand{\optionaltextcolor}{\color{gray}}
\newif\ifcomments
\begin{document}

\title{A linear domain decomposition method for two-phase flow in porous media}

\titlerunning{L-scheme for two phase flow}

\author{ David Seus\inst{1} \and Florin A. Radu\inst{2} \and Christian Rohde\inst{1}   }

\authorrunning{Seus et al.}   

\institute{
University of Stuttgart, Institute of Applied Analysis and Numerical Simulation, Pfaffenwaldring 57, 70569  Stuttgart, Germany, {\tt david.seus@ians.uni-stuttgart.de}
\and University of Bergen, Department of Mathematics, Postboks 7803, 5020 Bergen, Norway, {\tt Florin.Radu@uib.no}
}

\maketitle

\begin{abstract}
    This article is a follow up of our submitted  paper \cite{seus_mini27_SeusMitra2017} in which a
    decomposition of the Richards equation along two soil layers was discussed. 
    A decomposed problem was formulated and a decoupling and linearisation technique was presented to solve the problem in each time step in a fixed point type iteration. 
    This article extends these ideas to the case of two-phase in porous media and the convergence of the proposed domain decomposition method is rigorously shown.
\end{abstract}

\section{Introduction}
Soil remediation, enhanced oil recovery, $CO_2$ storage and geothermal energy are among the most important applications of porous media research and are notable examples of multiphase flow processes through porous media. 
In these situations mathematical modelling and simulation are among the most important tools available to predict subsurface processes and to asses feasibility and risk of envisioned technology, since measurements below surface are very difficult, expensive or not possible at all. 
Specifically when considering layered soil with very different porosity and permeability in each layer, the mathematical and computational problems appearing are most challenging as soil parameters may be even discontinuous and the appearing coupled nonlinear partial differential equations change type and degenerate. 
In these settings Newton based solvers can struggle with robustness and convergence.

To overcome the difficulties in robustness, the L-type linearisation, which replaces the Newton solver by a fixed point type iteration, has been proposed and tested in various model settings. 
We refer to \cite{seus_mini27_List2016,seus_mini27_SeusMitra2017} for an overview over the application of the L-scheme to the Richards/Richardson equation as well as comparisons to other methods 
and only mention \cite{seus_mini27_Slodicka2002} and \cite{seus_mini27_Pop2004365}, where the L-scheme was used in combination with mixed finite elements. More recently, the same ideas have been extended to two-phase flow in porous media, c.f. \cite{seus_mini27_Radu2015134} for finite volumes and \cite{seus_mini27_Radu2017} for  mixed finite elements. 
While most of the mentioned papers assume a Lipschitz continuous dependency of the water saturation on the pressure, \cite{seus_mini27_Radu2017} is the first to give error estimates for 
the Hölder continuous case, which is highly relevant due to van Genuchten-Mualem parametrisations falling into this category. The L-scheme has also been applied to other models and coupling problems.
\cite{seus_mini27_Karpinski2017} analyses the method for the case of two-phase flow including dynamic capillary pressure effects.
\cite{seus_mini27_Borregales2017,seus_mini27_Both2017101} propose an optimised Fixed Stress Splitting method, based on an L-type linearisation technique to solve robustly a coupling of flow and geomechanics, modelled by linearised Biot's equation.

The added robustness that L-type linearisations offer, come at the price of slower, i.e. linear convergence. 
Aside from using the L-scheme merely as a preconditioner as mentioned above, another way of optimising convergence speed is by combining the L-scheme with a model based domain decomposition ansatz. 
The physical situation under consideration, in our case, layered soil can be taken into account and a domain decomposition with respect to this physical situations can be performed. 
In \cite{seus_mini27_Berninger2015,seus_mini27_Berninger2010}, the authors considered a substructuring of  the Richards equation along the soil layers and apply monotone multigrid methods to solve the substructured problems. 

(Optimized) Schwarz-Waveform methods for Richards equation were considered in \cite{seus_mini27_Ahmed2017}, where also a posteriori error estimates and stopping criteria were discussed. For the full two-phase flow system domain decomposition based on mortar finite elements and Newton based solvers have been considered in \cite{seus_mini27_Yotov1998,seus_mini27_Yotov1997,seus_mini27_Yotov2001}.

In contrast to the existing approaches, we propose a new domain decomposition solver scheme, independently of the concrete space discretisation for two-phase flow in porous media. The scheme avoids the use of Newton based iterations. Maintaining the form of the equations in physical variables makes the method particularly accessible for application in the engineering context. 
In section \ref{seus_mini27:Problem_description}, we introduce the problem formulation, notation and formulate the iterative scheme. The reader is invited to compare the stated to \cite{seus_mini27_SeusMitra2017} as the ideas are analogous and many of the explanations given there carry over directly to the present case. 
For the sake of brevity, they had to be omitted here. Section \ref{seus_mini27:SectionConvergenceOfScheme} is devoted to the formulation and proof of the main result of this article, the convergence of the scheme. 
We conclude by giving a brief outlook on what questions 
we would like to focus on in the near future.

\section{Problem description}
\label{seus_mini27:Problem_description}
Let $\dom_l \subset \RR^d$ ($l=1,2$) be two Lipschitz domains connected through the interface $\Gamma$.  We consider the flow of two immiscible, incompressible fluids in an isotropic, non-deformable porous medium which is governed by the equations
  \begin{align}
    \Phi_l\partial_t S_l(p_{\nw,l},p_{w,l})
      - \dv\Bigl(\frac{k_{i,l}}{\mu_w} k_{w,l}\bigl( S_l(p_{\nw,l},p_{w,l}) \bigr) \vt{\bm{\nabla}} \bigl(   
	p_{w,l} - z_w\bigr)\Bigr)
      &=  f_{w,l} 
      && \mbox{on } \dom_l\times [0,T] \\
    - \Phi_l\partial_t S_l(p_{\nw,l},p_{w,l})
      - \dv\Bigl(\frac{k_{i,l}}{\mu_g} k_{g,l} \bigl( 1 - S_l(p_{\nw,l},p_{w,l}) \bigr)  \vt{\bm{\nabla}} \bigl( p_{\nw,l} - z_{\nw}\bigr) \Bigr)
      &=  f_{g,l} 
      && \mbox{on } \dom_l \times [0,T] \\
    p_{\alpha,1} = p_{\alpha,2},  \qquad \vec{F_{\alpha,1}}\cdot\vec{n_1} = \vec{F_{\alpha,2}}\cdot\vec{n_1} \qquad \alpha &= w, \nw
    && \mbox{on } \Gamma \times [0,T].
  \end{align}
We adopted a pressure-pressure formulation with wetting and non-wetting pressures $p_{l,w}$, $p_{l,\nw}$ as primary variables, together with the continuity of pressures and fluxes (c.f. \cref{seus_mini27:notation}) as  coupling conditions over the interface. 
Throughout the article, we adhere to the following  notational conventions and abbreviations.

\begin{notation} \label{seus_mini27:notation} $S_l$ is the water saturation and is assumed to be a function of the phase pressures via the capillary pressure saturation relationship $p_{c,l} = p_{\nw,l}- p_{w,l}$. $p_{w,l}$ and $p_{\nw,l}$ are the continuous pressures of the wetting and nonwetting phases on $\dom_l$, respectively. 
$\pwln{n}, \pgln{n}$ denote the pressures at time step $t^n:= n\cdot \tau$ and $S_l^k = \Phi_l S_l(\pgln[l]{k},\pwln[l]{k})$. Here, $\Phi_l$ are the constant porosities  on each $\dom_l$, $\rho_\alpha$ denote the densities of the phases, $\mu_\alpha$ are the viscosities and assuming an intrinsic permeability of the form $\vt{K_l} = k_{i,l}\vt{E_d}$ ($\vt{E_d}$ the identity matrix, dropped in the notation), we abbreviate
\begin{equation}
  \begin{aligned}
  %
    k_{w,l}^k &:= \frac{k_{i,l}}{\mu_w} k_{w,l}\bigl( S_{l}(\pgln[l]{k},\pwln[l]{k})\bigr),
      && k_{g,l}^k :=   \frac{k_{i,l}}{\mu_g}k_{g,l}\bigl(1-S_{l}(\pgln[l]{k},p_{w,l}^k)\bigr), \\
    k_{w,l}^{n,i} &:=  \frac{k_{i,l}}{\mu_w}k_{w,l} \bigl( S_{l}(\pglni{i},\pwlni{i})\bigr),
      && k_{g,l}^{n,i} :=  \frac{k_{i,l} }{\mu_g}k_{g,l}\bigl(1-S_{l}(\pglni{i},\pwlni{i})\bigr),
  \end{aligned}\label{seus_mini27:notation1}
\end{equation}
where $k_{\alpha,l}$ are the relative permeability functions. The pressures $\palni{i}$ are the iterates in our scheme, henceforth called the L-scheme, which will be explained. 
Finally, we write (and already used)
\begin{align} 
    \fluxb[\beta]{w} &:= - \frac{k_{i,l}}{\mu_w} k_{w,l}\bigl( S_l(\pgln[l]{\beta},\pwln[l]{\beta}) \bigr) \vt{\bm{\nabla}} \bigl(   
  \pwln[l]{\beta} - z_w\bigr), \quad
    \fluxb[\beta]{\nw} := - \frac{k_{i,l}}{\mu_g} k_{g,l}\bigl( 1 - S_l(\pgln[l]{\beta},\pwln[l]{\beta}) \bigr)  \vt{\bm{\nabla}} \bigl( \pgln[l]{\beta} - z_{\nw}\bigr) 
\end{align}
for the fluxes, where we abbreviated $z_\alpha = \rho_\alpha g x_3$ for the gravitational term. $\beta$ can be empty, meaning the continuous case, as well as $\beta = k$, meaning the pressure iterate at time step $k$ or  $n,i$, then denoting the $i$-th iteration of the L-scheme. In the latter case, we define 
  $  \fluxb[n,i]{\alpha} := -k_{\alpha,l}^{n,i-1} \vt{\bm{\nabla}} \bigl(   
    \palni[l]{i} - z_\alpha\bigr)$.
  For later use we also define $S_l^{n,i} := \Phi_l S_{l}(\pglni{i},\pwlni{i})$.
  
  Furthermore, denoting the whole domain by $\dom := \dom_1 \cup \Gamma \cup \dom_2 $,  the following spaces will be used. 
  $L^2(\dom)$ is the space of Lebesgue
  measurable,  square integrable  functions over $\dom$.  $H^1(\dom)$ contains functions in $L^2(\dom)$ having also weak derivatives in $L^2(\dom)$.
  $H_0^1(\dom) = \completion{C_0^\infty(\dom)}{H^1}$, where the completion is with respect to the standard
  $H^1$ norm and $C_0^\infty(\dom)$ is the space of smooth functions with compact support in
  $\dom$. The definition for $H^1(\dom_l)$ ($l = 1, 2$) is similar. With $\Gamma$ being a $(d-1)$ dimensional manifold in $\bar{\dom}$, $H^{\frac{1}{2}}(\Gamma)$ contains the traces of $H^1$ functions on $\Gamma$ 
  Given $u \in H^1(\dom)$, by its trace on $\Gamma$ is denoted by $\restrictTo{u}{\Gamma}$. We abbreviate
  \begin{align}
    \Fs_l&:= 
    \left\{ u \in H^1(\dom_l)\, \bigl|\bigr. \, \restrictTo{u}{\oB{l}}
    \equiv 0 \right\}, \\%
    %
    \Fs   &:= \left\{ (u_1,u_2) \in \Fs_1\times \Fs_2 \, \bigl|\bigr. \, {u_1}_{|_\Gamma} \equiv {u_2}_{|_\Gamma}
    \right\},\\
    \Tracespace &\hspace{3.1pt}= \bigl\{ \nu \in H^{1/2}(\Gamma) \,\bigl|\bigr.\, \nu = \onGamma{w} \mbox{ for a } w \in
    H_0^1(\dom) \bigr\}.
  \end{align}
  Note, that $\Fs = H^1_0(\dom)$. 
  $\Tracespace'$ denotes the dual space of $\Tracespace$. $\langle \cdot, \cdot \rangle_X$ will denote the $L^2(X)$ scalar product, with $X$ being one of the sets $\dom$, $\dom_l$ ($l = 1, 2$) or $\Gamma$. Whenever self understood, the notation of the domain of integration $X$ will be dropped. Furthermore, $\spl \cdot , \cdot \spr_\Gamma$ stands also for the duality pairing between $\Tracespace'$ and $\Tracespace$.
\end{notation}

After a backward Euler discretisation in time with time step $\tau  := \tfrac{T}{N}$ for some $N \in \mathbb{N}_0$, the coupled two-phase flow problem in weak form reads

\begin{problem}[Semi-discrete coupled two-phase flow system]
\label{seus_mini27:WeakSemiDiscreteFormulation} Find $(\paln[1]{n},\paln[2]{n})$, $\alpha \in \{w, \nw\}$, such that $\fluxn[l]{\alpha}\cdot \vt{n_l} \in \Tracespace$ and
  \begin{align}
    \spl S_l^{n} - S_l^{n-1},\testfunc{w}\spr  %
      - \tau\spl \fluxn{w},\bm{\nabla}\testfunc{w} \spr 
      + \tau \spl \fluxn[3-l]{w}\cdot \vt{n_l},\onGamma{\testfunc{w}}\spr_\Gamma 
      &=  \tau\spl  f_{w,l}^n ,\testfunc{w} \spr \label{seus_mini27:limitsystemeqn1w} \\ 
    -\spl S_l^{n} - S_l^{n-1},\testfunc{g}\spr  %
      - \tau\spl \fluxn{\nw},\bm{\nabla}\testfunc{g} \spr 
      + \tau \spl \fluxn[3-l]{\nw}\cdot \vt{n_l},\onGamma{\testfunc{g}}\spr_\Gamma 
      &=  \tau\spl  f_{g,l}^n ,\testfunc{g} \spr \label{seus_mini27:limitsystemeqn1g}
  \end{align}
  are satisfied for all $(\testfunc{w},\testfunc{\nw}) \in \Fs$.
\end{problem}
Note, that the pressure coupling is implicitly contained in the weak form, c.f. \cite{seus_mini27_SeusMitra2017}.
The following general assumptions will be used throughout the rest of the article.
\begin{assumptions} \label{seus_mini27:assumptionsondata} For $l=1,2$ we assume that%
\footnote{similar assumptions are used in the literature, c.f. \cite{seus_mini27_Pop2004365}, although more recently, the case of Hölder continuity has been treated, see \cite{seus_mini27_Radu2017}.} %
 \begin{enumerate}[itemsep=0.5ex,label=\alph*)]
  \item the relative permeabilities of the wetting phases $k_{w,l}:[0,1] \rightarrow [0,1]$ are strictly mo\-notonically \emph{increasing} and 
  Lipschitz continuous functions with Lipschitz constants $L_{k_{w,l}}$.
  The relative permeabilities of the non-wetting phases $k_{\nw,l}:[0,1] \rightarrow [0,1]$ are strictly mo\-notonically \emph{decreasing} and 
  Lipschitz continuous functions with Lipschitz constants $L_{k_{\nw,l}}$.
  \item there exists  $m \in \RR$ such that $ \frac{k_{i,l} k_{\alpha,l} }{\mu_\alpha} \geq  m > 0 $, for $\alpha = w, \nw$.
  \item the water saturations $S_l$ are functions of the pressures and the capillary pressure saturation relationships $p_c^l(S_l) := p_{\nw,l} - p_{w,l} $ are monotonically decreasing functions. Therefore the saturations,  $S_l\bigl(p_c^l\bigr) = S_l\bigl(p_{\nw,l} - p_{w,l}\bigr)$ are also monotonically decreasing as functions of $p_c^l$ and moreover assumed to be Lipschitz continuous with Lipschitz constants $L_{S_l}$. 
 \end{enumerate}
\end{assumptions}
  Note, that by abuse of notation, we actually denote by $L_{k_{\alpha,l}}$ the Lipschitz constant of the function $\frac{k_{i,l} k_{\alpha,l} }{\mu_\alpha}$ in (\ref{seus_mini27:notation1}).
%
%
Completely analogous to \cite{seus_mini27_SeusMitra2017}, we introduce an iteration scheme to solve {\itshape Problem \ref{seus_mini27:WeakSemiDiscreteFormulation}} that linearises and decouples simultaneously. 

\begin{problem}[L-scheme] \label{seus_mini27:WeakIterScheme}
Let $\lambda_{\alpha} \in(0,\infty)$ and assume that $\bigl(\paln[1]{n-1} , \paln[2]{n-1}\bigr) \in \Fs$ is given for $\alpha \in \{w,\nw \}$. %
Set $\palni{0}:= p_{\alpha,l}^{n-1}$ as well as
  $  \gali{0} := \fluxb[n-1]{\alpha} \cdot \vt{n_l} - \lambda_{\alpha} \onGamma{\paln[l]{n-1} }$
and assume that for some $i\in\NN$ the approximations $\bigl\{\palni{k}\bigr\}_{k=0}^{i-1}$ 
as well as $\bigl\{\gali{k}\bigr\}_{k=0}^{i-1}$ are already known for $l=1,2, \alpha=\nw,w$. 
Find $\bigl(\palni[1]{i},\palni[2]{i}\bigr) \in \Fs$ 
such that 
\begin{align}
  L_{\alpha,l}&\spl \palni{i},\testfunc{\alpha} \spr 
    - \tau\spl \fluxb[n,i]{\alpha},\bm{\nabla}\testfunc{\alpha} \spr 
    + \tau \spl \lambda_\alpha\onGamma{\palni{i}}  
    + \gali{i} ,\onGamma{\testfunc{\alpha}}\spr_\Gamma  \nonumber \\
  &= L_{\alpha,l}\spl \palni{i-1},\testfunc{\alpha}\spr 
      + (-1)^{\delta_{\alpha,w}} \spl S_l^{n,i-1} - S_l^{n-1},\testfunc{\alpha}\spr	
    + \tau\spl  f_{\alpha,l}^n ,\testfunc{\alpha} \spr
      \label{seus_mini27:weakiterschemeeqna}	\\
  \mbox{ with }\quad &  \spl \gali{i},\onGamma{\testfunc{\alpha}}\spr_{\Gamma}  :=\spl -2\lambda_{\alpha} 
  \onGamma{\palni[3-l]{i-1}}-   \gali[3-l]{i-1},\onGamma{\testfunc{\alpha}}\spr_{\Gamma}, 	\qquad\alpha=w,g \label{seus_mini27:weakgliupdate}
\end{align}
     is fulfilled for all $(\testfunc[1]{\alpha},\testfunc[2]{\alpha})\in \Fs$, where $L_{\alpha,l}>0$, $l=1,2$.
\end{problem}
%
By taking the formal limit in {\itshape Problem \ref{seus_mini27:WeakIterScheme}}, assuming that $\palni{i} \rightarrow \paln{n}$ and $\gali{i} \rightarrow \gal$, for some function $\gal$, the limit system of the L-scheme is
\begin{align}
  (-1)^{\delta_{\alpha,w}+1}\spl S_l^{n}& - S_l^{n-1},\testfunc{\alpha}\spr  %
    - \tau\spl \fluxb[n]{\alpha},\bm{\nabla}\testfunc{\alpha} \spr 
    + \tau \spl \lambda_\alpha \onGamma{\paln{n}}
    + \gal,\onGamma{\testfunc{\alpha}}\spr_\Gamma 
    =  \tau\spl  f_{\alpha,l}^n ,\testfunc{\alpha} \spr \tag{\ref{seus_mini27:weakiterschemeeqna}'} \label{seus_mini27:limititerschemeeqna} \\
  &\mbox{ where } \hspace{1cm}   \spl \gal,\onGamma{\testfunc{\alpha}}\spr_{\Gamma}  :=\spl -2\lambda_{\alpha}
  \onGamma{\paln[3-l]{}} -   \gal[3-l]{},\onGamma{\testfunc{\alpha}}\spr_{\Gamma}.   && \tag{\ref{seus_mini27:weakgliupdate}'}\label{seus_mini27:limitglupdate}
\end{align}
This can be shown to be equivalent to {\itshape Problem \ref{seus_mini27:WeakSemiDiscreteFormulation}} analogously to \cite[Lemma 2]{seus_mini27_SeusMitra2017}. The next section will be devoted to showing, that the L-scheme actually converges to this limit system and make precise the details. 
\section{Convergence of the scheme}
\label{seus_mini27:SectionConvergenceOfScheme}
We are now ready to formulate and prove our main result, the convergence of the L-scheme. 

\begin{theorem} \label{seus_mini27:mainresult}
  Assume there exists a unique solution $(\paln[1]{n},\paln[2]{n})\in \Fs$, $\alpha = \nw,w$, to Problem \ref{seus_mini27:WeakSemiDiscreteFormulation} that 
  additionally fulfills $\sup_{l,\alpha}\norm{\bm{\bm{\nabla}} \bigl( p_{\alpha,l}^n - z_\alpha \bigr) }_{L^\infty} \leq M < \infty $. %
  Let $\lambda_\alpha > 0$ and $L_{\alpha,l}\in \RR $ satisfy 
   $\frac{1}{L_{S_l}}-\sum_{\alpha}\frac{1}{2L_{\alpha,l}} > 0$ for $l=1,2$.
  For arbitrary starting pressures $\palni{0} := \nu_{l,\alpha} \in \Fs_l$, $(l=1,2, \alpha = w,\nw)$, let $\bigl\{\pwlni[1]{i},\pwlni[2]{i}\bigr\}_{i \in \NN_0}, \bigl\{\pglni[1]{i},\pglni[2]{i}\bigr\}_{i \in \NN_0} \in \Fs_l^\NN$ be a sequence of 
  solutions to Problem \ref{seus_mini27:WeakIterScheme}, $\bigl\{\gali{i}\bigr\}_{i \in \NN_0}$ being defined by 
  (\ref{seus_mini27:weakgliupdate}).
  Assume, that the time step $\tau$ satisfies
  \begin{align}
    C(L_{S_l},L_{\alpha,l},M,m):= \frac{1}{L_{S_l}} - \sum_{\alpha}\frac{1}{2L_{\alpha,l}} - \tau\sum_{\alpha} \frac{L_{k_{\alpha,l}}^2 M^2}{2m}   > 0 \label{seus_mini27:timesteprestriction}
  \end{align}
  for $l=1,2$. Then, 
  $\palni{i} \rightarrow \paln{n}$ in $\Fs_l$ and $\gali{i} \rightarrow \gal$ in $\Fs_l'$ as $i\rightarrow \infty$ for 
  $l=1,2$ and both phases.
\end{theorem}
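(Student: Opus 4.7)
The strategy mirrors the energy blueprint of \cite{seus_mini27_SeusMitra2017}: introduce the errors $\epali{i} := \palni{i} - \paln{n}$ and $\egali{i} := \gali{i} - \gal$ for $l=1,2$ and $\alpha\in\{w,\nw\}$, subtract the limit system \eqref{seus_mini27:limititerschemeeqna}--\eqref{seus_mini27:limitglupdate} from the scheme \eqref{seus_mini27:weakiterschemeeqna}--\eqref{seus_mini27:weakgliupdate}, test the resulting error equation with $\testfunc{\alpha} = \epali{i}$, and build an energy identity summed over both phases, both subdomains, and the iteration index.

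Three algebraic moves drive the bulk estimate. First, the $L$-term produces $L_{\alpha,l}\tspl \epali{i}-\epali{i-1},\epali{i}\tspr = \tfrac{L_{\alpha,l}}{2}\bigl(\|\epali{i}\|^2-\|\epali{i-1}\|^2+\|\epali{i}-\epali{i-1}\|^2\bigr)$ by polarisation. Second, the flux difference decomposes as $\fluxb[n,i]{\alpha}-\fluxb[n]{\alpha} = -\kalni{i-1}\bm{\nabla}\epali{i} - (\kalni{i-1}-\kaln{n})\bm{\nabla}(\paln{n}-z_\alpha)$; the first summand tested against $\bm{\nabla}\epali{i}$ yields $\tau m\|\bm{\nabla}\epali{i}\|^2$ by Assumption~\ref{seus_mini27:assumptionsondata}~b), while the second is bounded, using Lipschitz continuity of the relative permeability and of $S_l$ together with $\|\bm{\nabla}(\paln{n}-z_\alpha)\|_{L^\infty}\le M$, by $\tfrac{\tau m}{2}\|\bm{\nabla}\epali{i}\|^2 + \tau\tfrac{L_{k_{\alpha,l}}^2 M^2}{2m}\|S_l^{n,i-1}-S_l^n\|^2$, leaving the net bulk contribution $\tfrac{\tau m}{2}\|\bm{\nabla}\epali{i}\|^2 - \tau\tfrac{L_{k_{\alpha,l}}^2 M^2}{2m}\|S_l^{n,i-1}-S_l^n\|^2$. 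Third, summing the two phase equations collapses the right-hand side saturation contributions into $\spl S_l^{n,i-1}-S_l^n,\,p_{c,l}^{n,i}-p_{c,l}^n\spr$ with $p_{c,l}=p_{\nw,l}-p_{w,l}$; decomposing $p_{c,l}^{n,i}-p_{c,l}^n = (p_{c,l}^{n,i-1}-p_{c,l}^n) + (p_{c,l}^{n,i}-p_{c,l}^{n,i-1})$ and invoking the monotonicity--Lipschitz bound $(S(a)-S(b))^2 \le L_{S_l}(S(b)-S(a))(a-b)$ on the first summand produces $-\tfrac{1}{L_{S_l}}\|S_l^{n,i-1}-S_l^n\|^2$, while Young's inequality with weights $L_{\alpha,l}$ on the second yields $\sum_\alpha\tfrac{1}{2L_{\alpha,l}}\|S_l^{n,i-1}-S_l^n\|^2 + \sum_\alpha\tfrac{L_{\alpha,l}}{2}\|\epali{i}-\epali{i-1}\|^2$; the $\|\epali{i}-\epali{i-1}\|^2$ remainders are absorbed by the polarisation surplus, leaving precisely the coefficient $C(L_{S_l},L_{\alpha,l},M,m)$ of \eqref{seus_mini27:timesteprestriction} on $\|S_l^{n,i-1}-S_l^n\|^2$, which is positive by hypothesis.

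The main obstacle is the indefinite interface contribution $\tau\spl \lambda_\alpha\onGamma{\epali{i}}+\egali{i},\onGamma{\epali{i}}\spr_\Gamma$. The polarisation identity $4\lambda\tspl\lambda a+b,a\tspr_\Gamma = \|2\lambda a+b\|_\Gamma^2-\|b\|_\Gamma^2$ recasts it as $\tfrac{\tau}{4\lambda_\alpha}\bigl(\|2\lambda_\alpha\onGamma{\epali{i}}+\egali{i}\|_\Gamma^2 - \|\egali{i}\|_\Gamma^2\bigr)$. Subtracting the limit from \eqref{seus_mini27:weakgliupdate} gives $\egali{i} = -2\lambda_\alpha\onGamma{\epali[3-l]{i-1}}-\egali[3-l]{i-1}$, so $\|\egali{i}\|_\Gamma^2 = \|2\lambda_\alpha\onGamma{\epali[3-l]{i-1}}+\egali[3-l]{i-1}\|_\Gamma^2$. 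Setting $A_{l,i}^\alpha := \|2\lambda_\alpha\onGamma{\epali{i}}+\egali{i}\|_\Gamma^2$, the interface term for subdomain $l$ equals $\tfrac{\tau}{4\lambda_\alpha}(A_{l,i}^\alpha-A_{3-l,i-1}^\alpha)$, and summing over $l\in\{1,2\}$ turns this into the telescoping increment $\tfrac{\tau}{4\lambda_\alpha}\sum_l(A_{l,i}^\alpha-A_{l,i-1}^\alpha)$; subsequent summation over $i=1,\dots,N$ then collapses to the endpoints $A_{l,0}^\alpha$ and $A_{l,N}^\alpha$.

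Collecting everything yields an estimate of the form $\sum_{l,\alpha}\tfrac{L_{\alpha,l}}{2}\|\epali{N}\|^2 + \sum_\alpha\tfrac{\tau}{4\lambda_\alpha}\sum_l A_{l,N}^\alpha + \tfrac{\tau m}{2}\sum_{i=1}^N\sum_{l,\alpha}\|\bm{\nabla}\epali{i}\|^2 + \sum_{i=1}^N\sum_l C(L_{S_l},L_{\alpha,l},M,m)\|S_l^{n,i-1}-S_l^n\|^2 \le C_0$, with $C_0$ depending only on the initial pressure errors and the initial interface data. Uniformity in $N$ forces $\sum_i\|\bm{\nabla}\epali{i}\|^2<\infty$, hence $\|\bm{\nabla}\epali{i}\|\to 0$; the Poincaré inequality on $\Fs_l$ (available thanks to the homogeneous Dirichlet condition on $\oB{l}$) upgrades this to $\palni{i}\to\paln{n}$ in $\Fs_l$. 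Convergence $\gali{i}\to\gal$ in $\Fs_l'$ finally follows by testing the error equation with arbitrary $\testfunc{\alpha}\in\Fs_l$ and solving for the duality pairing $\tau\spl\egali{i},\onGamma{\testfunc{\alpha}}\spr_\Gamma$: all remaining contributions vanish as $i\to\infty$ by the just-established $H^1$-convergence of $\epali{i}$, continuity of the trace $\Fs_l\to\Tracespace\hookrightarrow L^2(\Gamma)$, and the Lipschitz continuity of $S_l$ and the relative permeability.
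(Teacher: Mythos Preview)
Your proposal is correct and follows essentially the same route as the paper's proof: the same polarisation of the $L$-term, the same add--subtract splitting of the flux difference, the same monotonicity--Lipschitz bound on the saturation term after summing the two phases, and the same Robin-type identity on $\Gamma$ producing a telescoping sum after summation over $l$ and $i$. Your packaging of the interface contribution via $A_{l,i}^\alpha=\|2\lambda_\alpha\onGamma{\epali{i}}+\egali{i}\|_\Gamma^2$ is merely a relabelling of the paper's use of $\|\egali[3-l]{i+1}\|_\Gamma^2$ through the error update rule, and your final argument for $\egali{i}\to 0$ in $\Fs_l'$ by directly bounding each term of the error equation is a slightly more compact variant of the paper's detour through $C_0^\infty(\dom_l)$ test functions and the integration-by-parts formula for the normal flux trace.
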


\begin{proof}
 For $\alpha \in \{w,\nw\}$ and $l=1,2$, we introduce the iteration errors $\epali{i}	:= \paln{n}- \palni{i}$ as well as $\egali{i}	:= \gal - \gali{i}$, add 
$L_{\alpha,l}\tspl \paln{n},\testfunc{\alpha}\tspr - L_{\alpha,l}\tspl \paln{n},\testfunc{\alpha}\tspr$ to 
\cref{seus_mini27:limititerschemeeqna} and 
subtract 
\cref{seus_mini27:weakiterschemeeqna} to arrive at
\begin{align}
L_{\alpha,l}\spl& \epali{i},\testfunc{\alpha} \spr %
    + \tau \lambda_\alpha \spl \onGamma{\epali{i}},\onGamma{\testfunc{\alpha}} \spr_\Gamma%
    + \tau   \spl \egali{i},\onGamma{\testfunc{\alpha}} \spr_\Gamma		%
    \nonumber\\
  &+ \tau \Bigl[ \bspl-\fluxb[n]{\alpha}%
 		 {\color{\addnullcolor}-\kalni{i-1}\gradPalnPlusGravity{n} 
 		 +\kalni{i-1}\gradPalnPlusGravity{n}}%
 		 +\fluxai{i},\bm{\nabla}\testfunc{\alpha}\bspr 
 	   \Bigr]			
 	   \label{seus_mini27:mainproofstep1a} \\
  &= L_{\alpha,l}\spl \epali{i-1},\testfunc{\alpha} \spr 
     +\underbrace{(-1)^{\delta_{\alpha,w}} \spl \Sln{n} -\Sln{n-1}, \testfunc{\alpha} \spr %
		-(-1)^{\delta_{\alpha,w}} \spl \Slni{i-1} - \Sln{n-1}, \testfunc{\alpha} \spr}_{%
		(-1)^{\delta_{\alpha,w}} \spl \Sln{n}- \Slni{i-1}, \testfunc{\alpha} \spr}.
 		\nonumber
\end{align}
Inserting $\testfunc{\alpha} := \epali{i}$ in \cref{seus_mini27:mainproofstep1a} 
and noting the identity 
\begin{align}
 L_{\alpha,l}\bspl\epali{i}-\epali{i-1},\epali{i}\bspr = \frac{L_{\alpha,l}}{2}\Bigl[ \bigl\|\epali{i}\bigr\|^2 - 
\bigl\|\epali{i-1}\bigr\|^2 
+ 
\bigl\|\epali{i}-\epali{i-1}\bigr\|^2\Bigr], \label{seus_mini27:mainproofstep1intermediate}
\end{align}
yields 
\begin{align}
  \frac{L_{\alpha,l}}{2}&\Bigl[ \bigl\|\epali{i}\bigr\|^2 %
			  - \bigl\|\epali{i-1}\bigr\|^2 
			  + \bigl\|\epali{i}-\epali{i-1}\bigr\|^2\Bigr] 
    + \tau \lambda_\alpha \spl \onGamma{\epali{i}},\onGamma{\epali{i}} \spr_\Gamma%
  = \spl \Sln{n}- \Slni{i-1}, (-1)^{\delta_{\alpha,w}}\epali{i} \spr  \nonumber \\
      &- \tau  \spl \egali{i},\onGamma{\epali{i}} \spr_\Gamma 	
      -\tau \bspl\Bigl(\kaln{n} -\kalni{i-1}\!\Bigr)\gradPalnPlusGravity{n},\bm{\nabla}\epali{i}\bspr
    -\tau \bspl\kalni{i-1}\bm{\nabla} \epali{i},\bm{\nabla}\epali{i}\bspr.
  \label{seus_mini27:mainproofstep2a}
\end{align}
Summing up \cref{seus_mini27:mainproofstep2a} over $\alpha = w,g$ and adding 
$ \spl \Sln{n}- \Slni{i-1}, \epwli{i-1} -\epgli{i-1} \spr $ 
yields
\begin{align}
  \sum_{\alpha}\frac{L_{\alpha,l}}{2}\Bigl[ \bigl\|\epali{i}\bigr\|^2 %
        &- \bigl\|\epali{i-1}\bigr\|^2 
        + \bigl\|\epali{i}-\epali{i-1}\bigr\|^2\Bigr] 
    {\color{\addnullcolor}+  \underbrace{\spl \Sln{n}- \Slni{i-1}, \epwli{i-1} -\epgli{i-1} \spr}_{I_1}} \nonumber\\
  &= \underbrace{\spl \Sln{n}- \Slni{i-1}, {\color{\addnullcolor} \epwli{i-1}} - \epwli{i} - {\color{\addnullcolor} \bigl(\epgli{i-1}} - \epgli{i}{\color{\addnullcolor}\bigr)} \spr}_{=: I_2} 
  - \tau \sum_{\alpha} \spl \lambda_{\alpha}\onGamma{\epali{i}} + \egali{i},\onGamma{\epali{i}} \spr_\Gamma \nonumber \\  
  &-\underbrace{\tau \sum_{\alpha} \bspl\Bigl(\kaln{n} -\kalni{i-1}\!\Bigr)\gradPalnPlusGravity{n},\bm{\nabla}\epali{i}\bspr}_{=: I_3}
    -\underbrace{\tau \sum_{\alpha} \bspl\kalni{i-1}\bm{\nabla} \epali{i},\bm{\nabla}\epali{i}\bspr.}_{=:I_4} \label{seus_mini27:mainproofstep_summing}
\end{align}

We estimate the assigned terms $I_1$--$I_4$ from (\ref{seus_mini27:mainproofstep_summing}) one by one
and start with $I_1$.
Recall $S_l\bigl(\pwln{},\pgln{}\bigr) = p_c^{-1}\bigl(\pgln{} - \pwln{}\bigr)$ and that $p_c' < 0 $ so that we actually have  the dependence $S_l\bigl(\pwln{},\pgln{}\bigr) = S_l\bigl(\pgln{} - \pwln{}\bigr)$ where $S_l$ is monotonically decreasing. Thereby we have
\begin{align}
 \Bigl| S_l\bigl(\pgln{n} - \pwln{n}\bigr) &- S_l\bigl(\pglni{i-1} - \pwlni{i-1}\bigr) \Bigr|^2
 \leq L_{S_l} \Bigl| S_l\bigl(\pgln{n} - \pwln{n}\bigr) - S_l\bigl(\pglni{i-1} - \pwlni{i-1}\bigr) \Bigr| 
 \Bigl| \epgli{i-1} - \epwli{i-1} \Bigr| \nonumber \\
 &= L_{S_l} \Bigl(S_l\bigl(\pgln{n} - \pwln{n}\bigr) - S_l\bigl(\pglni{i-1} - \pwlni{i-1}\bigr) \Bigr)
 \Bigl( \epwli{i-1} -\epgli{i-1} \Bigr) \label{seus_mini27_estimateForI1}
\end{align}
with the Lipschitz continuity of $S_l$. Therefore, by integrating  (\ref{seus_mini27_estimateForI1}), we estimate $I_1$ by 
\begin{align}
\frac{1}{L_{S_l}} \bigl\| \Sln{n} - \Slni{i-1} \bigr\|^2 %
  \leq \spl \Sln{n} - \Slni{i-1}, \epwli{i-1} - \epgli{i-1} \spr. \label{seus_mini27:I_1}
\end{align}

Young's inequality $\abs{xy}\leq \epsilon \abs{x}^2 + \frac{1}{4\epsilon}\abs{y}^2$, $\epsilon > 0$, applied to the term $I_2$, gives
\begin{align}
  \abs{I_2} = \Bigl| \spl \Sln{n} - \Slni{i-1}, \epwli{i-1} &- \epwli{i} -\bigl(\epgli{i-1} - \epgli{i}\bigr) \spr \Bigr| 
  \leq \frac{L_{w,l}}{2}\bigl\|\epwli{i-1} - \epwli{i}\bigl\|^2   \nonumber\\
  &+ \frac{L_{\nw,l}}{2}\bigl\|\bigl(\epgli{i-1} - \epgli{i}\bigr) \bigr\|^2%
  + \left(\frac{1}{2L_{w,l}} + \frac{1}{2L_{\nw,l}}\right)  \bigl\| \Sln{n}- \Slni{i-1}\bigr\|^2, \label{seus_mini27:I_2}
\end{align}
where we chose $\epsilon_{\alpha} = \frac{L_{\alpha,l}}{2}$ for $\alpha = w, \nw$.

For $I_3$, consider the estimation of the summands
\begin{align}
  \Bigl|\bspl\Bigl(\kaln{n} &-\kalni{i-1}\!\Bigr)\gradPalnPlusGravity{n},\bm{\nabla}\epali{i}\bspr \Bigr|
  \leq \bigl\|\bigl(\kaln{n} -\kalni{i-1}\bigr)\gradPalnPlusGravity{n}\bigr\| 
\bigl\|\bm{\nabla}\epali{i}\bigr\| 				\nonumber \\
  &\leq L_{k_{\alpha,l}}M \bigl\|\Sln{n}-\Slni{i-1}\bigr\| \bigl\|\bm{\nabla}\epali{i}\bigr\|
  \leq L_{k_{\alpha,l}}M\epsilon_{\alpha,l}\bigl\|\Sln{n}-\Slni{i-1}\bigr\|^2 + 
  \frac{L_{k_{\alpha,l}}M}{4\epsilon_{\alpha,l}} 
\bigl\|\bm{\nabla}\epali{i}\bigr\|^2.   \label{seus_mini27:mainproofstep2b}
\end{align}
  Here, we used the Lipschitz-continuity of $k_{\alpha,l}$ 
  and the assumption $\sup_{l,\alpha}\norm{\gradPalnPlusGravity{n}}_{\infty} \leq M$. $\epsilon_{\alpha,l}$ will be chosen later. $I_3$ can therefore be estimated as 
\begin{align}
  \abs{I_3} \leq \tau \sum_{\alpha}L_{k_{\alpha,l}}M \epsilon_{\alpha,l} \bigl\| 	\Sln{n}-\Slni{i-1}\bigr\|^2 + 
  \tau \sum_{\alpha} \frac{L_{k_{\alpha,l}}M}{4\epsilon_{\alpha,l}} 
  \bigl\|\bm{\nabla}\epali{i}\bigr\|^2. \label{seus_mini27:I_3}
\end{align}

Finally, by Assumption \ref{seus_mini27:assumptionsondata}b), we estimate $I_4$ by
  $\tau \bspl\kalni{i-1}\bm{\nabla} \epali{i},\bm{\nabla}\epali{i}\bspr > \tau m \bigl\|\bm{\nabla} \epali{i}  \bigr\|^2$.
Using this and the estimates (\ref{seus_mini27:I_1}), (\ref{seus_mini27:I_2}) and (\ref{seus_mini27:I_3}), equation (\ref{seus_mini27:mainproofstep_summing}) becomes 
\begin{align}
  \sum_{\alpha}\frac{L_{\alpha,l}}{2}\Bigl[ \bigl\|\epali{i}\bigr\|^2 %
        &- \bigl\|\epali{i-1}\bigr\|^2\Bigr] 
    + \frac{1}{L_{S_l}} \bigl\| \Sln{n} - \Slni{i-1} \bigr\|^2 
    + \tau \sum_{\alpha} \spl \lambda_{\alpha}\onGamma{\epali{i}} + \egali{i},\onGamma{\epali{i}} \spr_\Gamma \nonumber\\
  &\leq \sum_{\alpha}\Bigl( \frac{1}{2L_{\alpha,l}} + \tau L_{k_{\alpha,l}} M \epsilon_{\alpha,l} \Bigr)\bigl\| \Sln{n}-\Slni{i-1}\bigr\|^2  
  + \tau \sum_{\alpha} \left(\frac{L_{k_{\alpha,l}}M}{4\epsilon_{\alpha,l}} - m \right)  
  \bigl\|\bm{\nabla}\epali{i}\bigr\|^2. \tag{\ref{seus_mini27:mainproofstep_summing}'}\label{seus_mini27:mainproofstep3}
\end{align}

In order to deal with the interface terms $\tau \spl \egali{i},\epaliOnGamma{i} \spr_\Gamma$, recall 
that
$\spl\cdot,\cdot\spr_\Gamma$ denotes both scalar product in $\Tracespace$ and dual pairing for functionals in $\Tracespace'$. Subtracting
$(\ref{seus_mini27:weakgliupdate})$ from $(\ref{seus_mini27:limitglupdate})$, i.e. obtaining %
$ \egali{i} = -2\lambda_{\alpha} \epaliOnGamma[3-l]{i-1} -  \egali[3-l]{i-1}$, 
  we  get
  \begin{align}
    \ifoptionaltext
    { \optionaltextcolor
    \spl \egali{i+1}, \egali{i+1} \spr_\Gamma }&{\optionaltextcolor%
    = 4\lambda_{\alpha}^2 \spl \epaliOnGamma[3-l]{i}, \epaliOnGamma[3-l]{i} \spr_\Gamma %
    + \spl \egali[3-l]{i}, \egali[3-l]{i} 
    \spr_\Gamma %
    + 4\lambda_{\alpha}  \spl \epaliOnGamma[3-l]{i}, \egali[3-l]{i}\spr_\Gamma}, \mbox{ 
  \optionaltextcolor thus }\nonumber \\
    \fi
    \bigl\| \epaliOnGamma[l]{i} \bigr\|^2_\tGamma  &= \frac{1}{  4\lambda_{\alpha}^2 } %
    \left(\bigl\| \egali[3-l]{i+1} \bigr\|^2_\tGamma %
    -\bigl\| \egaliOnGamma[l]{i} \bigr\|^2_\tGamma %
    - 4\lambda_{\alpha}  \spl \epaliOnGamma[l]{i}, \egali[l]{i}\spr_\tGamma \right).
    \label{seus_mini27:boundarytrickpre}
  \end{align}

Inserting eq. (\ref{seus_mini27:boundarytrickpre}) in eq. (\ref{seus_mini27:mainproofstep3}), we arrive at
\begin{align}
    \biggl(\frac{1}{L_{S_l}} - \sum_{\alpha}&\Bigl( \frac{1}{2L_{\alpha,l}} + \tau L_{k_{\alpha,l}} M \epsilon_{\alpha,l} \Bigr)\! \biggr) \bigl\| \Sln{n} - \Slni{i-1} \bigr\|^2 
    +\tau \sum_{\alpha} \left(m - \frac{L_{k_{\alpha,l}}M}{4\epsilon_{\alpha,l}}  \right)  
    \bigl\|\bm{\nabla}\epali{i}\bigr\|^2 
    \nonumber\\
  &\leq \sum_{\alpha}\frac{L_{\alpha,l}}{2}\Bigl[\bigl\|\epali{i-1}\bigr\|^2 - \bigl\|\epali{i}\bigr\|^2 \Bigr] 
  + \tau \sum_{\alpha} \frac{1}{  4\lambda_{\alpha} } %
  \left(\bigl\| \egaliOnGamma[l]{i} \bigr\|^2_\tGamma - \bigl\| \egali[3-l]{i+1} \bigr\|^2_\tGamma \right).
  \label{seus_mini27:mainproofstep4}
\end{align}

Now choose $\epsilon_{\alpha,l} = \frac{L_{k_{\alpha,l}}M}{2m}$ such that $m -\frac{L_{k_{\alpha,l}}M}{4\epsilon_{\alpha,l}} = \frac{m}{2} > 0$ for both $l$ and $\alpha$. 
Taking   into account that by assumption $L_{\alpha,l}$ have been chosen large enough that $\frac{1}{L_{S_l}}-\sum_{\alpha}\frac{1}{2L_{\alpha,l}} 
> 0$ and that (\ref{seus_mini27:timesteprestriction}) holds,
summing (\ref{seus_mini27:mainproofstep4}) over iterations $i=1,\dots,r$  then leads to
\begin{align}
  \sum_{i=1}^r C(L_{S_l},L_{\alpha,l},M,m)& \bigl\| \Sln{n} - \Slni{i-1} \bigr\|^2 
  +\tau \sum_{i=1}^r\sum_{\alpha} \frac{m}{2}  
  \bigl\|\bm{\nabla}\epali{i}\bigr\|^2 
  \nonumber\\
  &\leq \sum_{\alpha}\frac{L_{\alpha,l}}{2}\Bigl[\bigl\|\epali{0}\bigr\|^2 - \bigl\|\epali{r}\bigr\|^2 \Bigr] 
  + \tau \sum_{\alpha} \frac{1}{  4\lambda_{\alpha} } %
  \left(\bigl\| \egaliOnGamma[l]{1} \bigr\|^2_\tGamma - \bigl\| \egali[3-l]{r+1} \bigr\|^2_\tGamma \right), 
  \label{seus_mini27:mainproofstep5}
\end{align}
where the appearing telescopic property of sums on the right hand side have been exploited. This implies the estimates
\begin{align}
  \sum_{i=1}^r\sum_{l=1}^2C(L_{S_l},L_{\alpha,l},M,m)& 
  \bigl\| \Sln{n} - \Slni{i-1} \bigr\|^2  
  \leq  \sum_{l,\alpha}\frac{L_{\alpha,l}}{2}\bigl\|\epali{0}\bigr\|^2
  +\tau \sum_{l,\alpha} \frac{1}{  4\lambda_{\alpha} } %
  \bigl\| \egaliOnGamma[l]{1} \bigr\|^2_\tGamma, \label{seus_mini27:mainproofstep6a} \\%
  \tau \sum_{i=1}^r\frac{m}{2}\bigl\|\bm{\bm{\nabla}} e_p^i \bigr\|^2%
  &\leq  \sum_{l,\alpha}\frac{L_{\alpha,l}}{2}\bigl\|\epali{0}\bigr\|^2
  +\tau \sum_{l,\alpha} \frac{1}{  4\lambda_{\alpha} } %
  \bigl\| \egaliOnGamma[l]{1} \bigr\|^2_\tGamma,
  \label{seus_mini27:mainproofstep6b}
\end{align}
for which we introduced the abbreviation $\bigl\|\bm{\bm{\nabla}} e_p^i \bigr\|^2 := \sum_{\alpha} \bigl\|\bm{\nabla}\epali{i}\bigr\|^2$.
Since the right hand sides are independent of $r$, we thereby conclude that %
$ \bigl\| \Sln{n} - \Slni{i-1} \bigr\| $, $\bigl\|\gradepali{i} \bigr\|$ $\longrightarrow 0$ 	 as $i\rightarrow \infty$. Due to the partial homogeneous Dirichlet boundary, the Poincaré inequality is applicable %
  %
  %
  for functions in $\Fs_l$ so that (\ref{seus_mini27:mainproofstep6b}) further 
  implies $\bigl\|\epali{i} \bigr\|$ $\longrightarrow 0$ 	 as $i\rightarrow \infty$.

  In order to show that $\egali{i} \rightarrow 0$ in $\Fs_l'$, we subtract for both phases again 
  (\ref{seus_mini27:weakiterschemeeqna}) from (\ref{seus_mini27:limititerschemeeqna}) and consider only test functions in $\testfunc{\alpha} 
\in 
  C_0^\infty(\dom_l)$, i.e.
  \begin{align}
    -\tau \bspl\fluxn{\alpha} - \fluxai{i},\bm{\bm{\nabla}}\testfunc{\alpha}\bspr
      &= -L_{\alpha,l}\spl \epali{i},\testfunc{\alpha} \spr %
	+L_{\alpha,l}\spl \epali{i-1},\testfunc{\alpha} \spr 
	+(-1)^{\delta_{\alpha,w}} \spl \Sln{n}- \Slni{i-1}, \testfunc{\alpha} \spr. \label{seus_mini27:giconvergencestep1}
  \end{align}
  Thus, $\dv\bigl(\fluxn{\alpha} - \fluxai{i}\bigr)$ exists in $L^2(\dom_l)$ and
   \begin{align}
      -\tau \dv\bigl(\fluxn{\alpha} - \fluxai{i}\bigr)
      = L_{\alpha,l} \bigl(\epali{i} - \epali{i-1}\bigr)  
      -(-1)^{\delta_{\alpha,w}}\bigl( \Sln{n}- \Slni{i-1}\bigr) \label{seus_mini27:giconvergencestep2}
   \end{align}
  almost everywhere, from which we deduce for $\testfunc{\alpha}$ now taken to be in $\Fs_l$
  \begin{align}
      \Bigl|\bspl\dv\bigl(\fluxn{\alpha} - \fluxai{i}\bigr),\testfunc{\alpha} \bspr\Bigr| 
	&\leq \frac{L_{\alpha,l}}{\tau}\bigl\|\epali{i} - \epali{i-1} \bigr\|\bigl\|\testfunc{\alpha}\bigr\| 
	+ \frac{1}{\tau}\bigl\| \Sln{n}- \Slni{i-1}\bigr\| \bigl\| \testfunc{\alpha} \bigr\|.
      \label{seus_mini27:giconvergencestep3}
  \end{align}
  Introducing the abbreviation $\bigl|\Psi_{\alpha,l}^{n,i}\bigl(\testfunc{\alpha}\bigr)\bigr|$ for the left hand side of (\ref{seus_mini27:giconvergencestep3}),
  \begin{align}
    \sup_{\stackrel{\testfunc{\alpha} \in \Fs_l}{\testfunc{\alpha} \neq 0}} 
    &\frac{\bigl|\Psi_l^{n,i}\bigl(\testfunc{\alpha}\bigr)\bigr|}{\norm{\testfunc{\alpha}}_{\Fs_l}} 
    \leq \frac{L_{\alpha,l}}{\tau}\bigl\|\epali{i} - \epali{i-1} \bigr\|
    + \frac{1}{\tau}\bigl\| \Sln{n}- \Slni{i-1}\bigr\|
    \longrightarrow 0 \quad (i \rightarrow \infty) \label{seus_mini27:giconvergencestep4}
  \end{align}
  follows as a consequence of (\ref{seus_mini27:mainproofstep6b}). In other words $\bigl\|\Psi_l^{n,i} \bigr\|_{\Fs_l'} \rightarrow 0$ as 
$i 
  \rightarrow \infty$.
  On the other hand, starting again from (\ref{seus_mini27:mainproofstep1a}) (without the added zero term), this time however 
inserting   $\testfunc{\alpha} \in \Fs_l$ and integrating again by parts, keeping in mind (\ref{seus_mini27:giconvergencestep2}), 
one notices
  \begin{align}
    \spl \egali{i},\onGamma{\testfunc{\alpha}} \spr_\Gamma   
      = - \lambda_{\alpha} &\spl\epaliOnGamma{i},\onGamma{\testfunc{\alpha}} \spr_\Gamma 
      +\bspl \bigl[ \fluxn{\alpha}- \fluxai{i} \bigr]\cdot\vt{n_l},\onGamma{\testfunc{\alpha}} 
  \bspr_\Gamma . \label{seus_mini27:giconvergencestep5}
  \end{align}
    We already know, that $\bigl\|\epali{i}\bigr\|_{\Fs_l} \rightarrow 0$ as $i \rightarrow 0$ and we will use the 
continuity   of the trace operator to deal with the term $\spl\epaliOnGamma{i},\onGamma{\testfunc{\alpha}} \spr_\Gamma $. 
For the last summand in (\ref{seus_mini27:giconvergencestep5}) we have by the 
integration by parts formula
  \begin{align}
    \bspl \bigr[ \fluxn{\alpha}- \fluxai{i}  \bigr]\cdot\vt{n_l},\onGamma{\testfunc{\alpha}} 
  \bspr_\Gamma 
    &= \Psi_{\alpha,l}^{n,i}(\testfunc{\alpha}) 
      + \bspl \fluxn{\alpha}- \fluxai{i}, \bm{\bm{\nabla}}\testfunc{\alpha}\bspr,
  \label{seus_mini27:giconvergencestep6}
  \end{align}
  and the second term can be estimated by
  \begin{align}
    \Bigl|\bspl \kaln{n} \bm{\bm{\nabla}} \bigl( \paln{n} + z_{\alpha} \bigr)  
    &- \kalni{i-1}\bm{\bm{\nabla}} \bigl( \palni{i} + z_{\alpha} \bigr) 
    ,\bm{\bm{\nabla}}\testfunc{\alpha}\bspr\Bigr| \nonumber \\
    &\leq
    \Bigl|\bspl \bigl( \kaln{n} -\kalni{i-1}\bigr) \bm{\bm{\nabla}} \bigl( \paln{n} + z_{\alpha} \bigr)  
    - \kalni{i-1}\gradepali{i} ,\bm{\bm{\nabla}}\testfunc{\alpha}\bspr\Bigr|
\nonumber   \\%
    &\leq  L_{k_l} M \bigl\|\Sln{n} - \Slni{i-1}\bigr\| \bigl\|\testfunc{\alpha}\bigr\|_{\Fs_l} + 
    M_{k_l}\bigl\|   \gradepali{i}\bigr\| \bigl\|\testfunc{\alpha} \bigr\|_{\Fs_l},
  \end{align}
  where we used the same reasoning as in (\ref{seus_mini27:mainproofstep2b}) and $\max|k_l| \leq M_{k_l}$. 
  With this, we get 
  \begin{align}
    \sup_{\stackrel{\testfunc{\alpha} \in \Fs_l}{\norm{\testfunc{\alpha}}_{\Fs_l} = 1}}&
    \Bigl|\bspl \bigr[ \fluxn{\alpha}- \fluxai{i} \bigr]\cdot\vt{n_l},\testfunc{\alpha}\bspr_\Gamma\Bigr| 
    \leq \bigl\|\Psi_{\alpha,l}^{n,i}\bigr\|_{\Fs_l'}
    %
    + L_{k_l} M \bigl\|\Sln{n} - \Slni{i-1}\bigr)\bigr\|  + M_{k_l}\bigl\| \gradepali{i}\bigr\| 
  \longrightarrow 0  \label{seus_mini27:giconvergencestep7} 
  \end{align}
  as $i \rightarrow \infty $ from (\ref{seus_mini27:giconvergencestep6}). 
  Finally, we deduce from (\ref{seus_mini27:giconvergencestep5}) and the continuity of the trace operator 
  on Lipschitz domains
   \begin{align*}
    \sup_{\stackrel{\testfunc{\alpha} \in \Fs_l}{\testfunc{\alpha} \neq 0}}
    &\frac{\bigl|\spl \egali{i},\onGamma{\testfunc{\alpha}} \spr_\Gamma\bigr|}{\norm{\testfunc{\alpha}}_{\Fs_l}} 
    \leq \lambda_{\alpha}\tilde{C}\bigl\| \epali{i} \bigr\|_{\Fs_l} 
      + \bigl\|\Psi_{\alpha,l}^{n,i}\bigr\|_{\Fs_l'} 
      + L_{k_l} M \bigl\|\Sln{n} - \Slni{i-1}\bigr)\bigr\|  
      + M_{k_l}\bigl\| \gradepali{i} \bigr\| \longrightarrow 0 , 
  \end{align*}
  as $i \rightarrow \infty$.
  This shows $\egali{i} \rightarrow 0$ in $\Fs_l'$ for $l = 1,2$ and $\alpha = w, \nw$ and concludes the proof. \hfill\bewend
\end{proof}

%
%
%


\def\cprime{$'$}

\section{Conclusion}

We proposed and analysed a fully implicit domain decomposition method for efficiently solving two-phase flow in heterogeneous porous media. The developed scheme avoids using the Newton method.
The generalisation to several soil layers, the analysis of a concrete discretisation in space as well as thorough numerical testing are left for future work. 

\section*{Acknowledgments}  

This work was partially supported by the NFR supported project CHI \#25510 and by the VISTA project \#6367.
%
%
%

\ifdevelopmentVersion
    \bibliographystyle{vmams}
    \bibliography{articlebibliography}
\else
  \ifx\undefined\bysame
  \newcommand{\bysame}{\leavevmode\hbox to3em{\hrulefill}\,}
  \fi

\fi

\end{document}